\documentclass[a4paper, 12pt]{article}
\usepackage{}

\usepackage{mathrsfs}
\usepackage{epsfig}
\usepackage{amsmath}
\usepackage{amssymb}
\usepackage{latexsym}
\usepackage{amsfonts}
\usepackage{amsthm}

\usepackage{bbm}

\usepackage[numbers,sort&compress]{natbib}
\usepackage{graphicx}
\ifx\pdfoutput\undefined  \else
 \fi

\usepackage[centerlast]{caption2}

\usepackage{color}

\usepackage{txfonts}
\usepackage{amssymb}
\usepackage{mathrsfs}
\usepackage{amsfonts}

\newtheorem{theorem}{Theorem}[section]
\newtheorem{lemma}[theorem]{Lemma}

\newtheorem{prop}[theorem]{Proposition}

\newtheorem{remark}[theorem]{Remark}
\newtheorem{definition}[theorem]{Definition}
\newtheorem{exam}[theorem]{Example}

\newtheorem{question}[theorem]{Question}

\usepackage{latexsym,amssymb}
\usepackage{amsmath}

\begin{document}

\title{{The universal zero-sum invariant and weighted zero-sum for infinite abelian groups II}}
\author{
Guoqing Wang\\
\small{School of
Mathematical Sciences, Tiangong University, Tianjin, 300387, P. R. China}\\
\small{Email: gqwang1979@aliyun.com}
\\
}
\date{}
\maketitle

\begin{abstract}
Let $G$ be an abelian group, and let $\mathcal F (G)$ be the free
commutative monoid with basis $G$, and $\mathcal A (G)$ the set consisting of all minimal zero-sum subsequences over $G$. For any subset $\Omega \subset \mathcal F (G)$, we define the universal zero-sum invariant ${\mathsf d}_{\Omega}(G)$ as the minimal positive integer $\ell$ such that
every sequence $T$ over $G$ of length $\ell$ contains a subsequence lying in $\Omega$.  The classical Davenport constant ${\rm D}(G)$ for $G$ can also be written as ${\mathsf d}_{\mathcal A (G)}(G)$. We give a complete classification of all finite abelian groups for which $\mathcal A(G)$ is a minimal set to represent the Davenport constant.

We also investigate  the weighted Davenport constant over abelian groups (which may be infinite).  Let $F$ and $G$ be abelian groups, and let $\Psi \subseteq \mathrm{Hom}(F,G)$ denote a weight set.
We reinterpret the weighted Davenport constant $D_{\Psi}(G)$ in terms of coverings of Cartesian powers $F^n$ by kernels of induced homomorphisms arising from tuples in $\Psi^n$; these homomorphisms are naturally linked to coproducts in the category of abelian groups.
This motivates the notion of kernel-cover compactness, a property characterizing when such kernel coverings admit finite subcovers. We establish a correspondence between weighted zero-sum invariants and kernel-cover structures, where the bound  $D_{\Psi}(G)\le n$ is equivalent to a canonical kernel-cover property on $F^n$.   We further study finite reduction phenomena for infinite weight sets and provide sufficient conditions ensuring uniform kernel-cover compactness.
The present work constitutes a follow-up to [G. Wang, Comm. Algebra, 2025].
\end{abstract}

\noindent{\small {\bf Key Words}: {\sl  Universal zero-sum invariant; Davenport constant; Weighted Davenport constant;  Minimal zero-sum sequences; Minimal sets with respect to Davenport constant;   Zero-sum for infinite abelian groups; Covers of groups; Kernel-cover-compactness;}}

\section {Introduction}

Let $G$ be a finite abelian group. The Davenport constant ${\rm D}(G)$ is a core invariant in zero-sum theory,
defined as the
minimal positive integer $\ell$ such that every sequence of $\ell$ terms from $G$
contains a nonempty subsequence whose sum equals the identity element of $G$.
 This invariant originates from ideal class groups in algebraic number theory (see \cite{Olson1}) and has deep connections with factorization theory (see
\cite{CziszterDoGerolding, GRuzsa, GH}) as well as combinatorial structures in algebra (see \cite{Grynkiewiczmono}).
Over the past decades, extensive research motivated by this constant has evolved zero-sum problems into a rich subfield of combinatorial number theory. We refer readers to \cite{GaoGeroldingersurvey} for a comprehensive survey.

Similar to the Davenport constant, zero-sum problems over finite abelian groups investigate the existence of substructures satisfying prescribed additive properties within combinatorial objects such as sets, sequences (multisets), graphs and hypergraphs. Within this framework, the existence of additive substructures is encoded by additive invariants collectively referred to as zero-sum constants. Over the past sixty years, many zero-sum constants have been introduced stemming from distinct research backgrounds; several of these originate from other branches of discrete mathematics, finite geometry included. To unify disparate zero-sum invariants, a general unifying framework was proposed in \cite{GaoLiPengWang}, formalized as Definition \ref{definition domenga}. Readers may consult \cite{GaoHong1,GaoHong2} for further related work.

Let $\mathcal F(G)$ be the free commutative monoid generated by $G$, and let $\mathcal B(G)$ denote the submonoid consisting of all zero-sum sequences over $G$.

\begin{definition} \label{definition domenga} \cite{GaoLiPengWang}
For any nonempty subset $\Omega \subset \mathcal F (G)$, we define the universal zero-sum invariant ${\mathsf d}_{\Omega}(G)$ to be the minimal positive integer $\ell$ (set ${\mathsf d}_{\Omega}(G)=\infty$ if no such $\ell$ exists) such that
every sequence over $G$ of length $\ell$ admits a subsequence contained in $\Omega$.
\end{definition}

In terms of this universal zero-sum invariant, the Davenport constant satisfies
$${\rm D}(G)={\mathsf d}_{\mathcal A(G)}(G),$$
where $\mathcal A(G)$ denotes the set of all minimal zero-sum sequences over $G$ (see \cite{GH}). This leads naturally to the following basic question.

\begin{question}\label{question} \cite{WangcommInAlgebra}
Is $\Omega=\mathcal A (G)$ minimal with respect to ${\mathsf d}_{\Omega}(G)={\rm D}(G)$, i.e., does there exist $\Omega'\subsetneq\mathcal A (G)$ such that ${\mathsf d}_{\Omega'}(G)={\rm D}(G)$?
\end{question}

This question is tightly linked to structural problems within zero-sum theory and has connections with some classical zero-sum problems. For instance, the well-known Lemke-Kleitman index conjecture \cite{LemkeKleitman} for finite cyclic groups can be restated as a statement concerning minimal sets to represent Davenport constant (see Conjecture 1.6 in \cite{WangcommInAlgebra}).

A partial answer of Question \ref{question} was established in \cite{WangcommInAlgebra}, stated as
Theorem A below.

\noindent {\bf Theorem A.} \ {Let $G$ be a finite nonzero abelian group with $\exp(G)\neq 3$. Then $\mathcal A (G)$ is a minimal set with respect to the Davenport constant ${\rm D}(G)$ if and only if one of the following conditions holds: (i)  $G\cong \mathbb{Z}_4$; (ii) $G\cong \mathbb{Z}_5$; (iii) $G\cong \mathbb{Z}_2^r$ for some $r\geq 1$.}

The remaining case $\exp(G)=3$ is still not fully understood in general. In Section 3, we conduct a thorough analysis of this setting and classify all finite abelian groups for which $\mathcal A(G)$ is minimal; the main result was stated as Theorem \ref{theorem in group exp(G)geq 6}.

On a separate line of research, weighted analogues of the Davenport constant, formulated within the general framework of group homomorphisms over finite abelian groups, were introduced by Yuan and Zeng (see \cite{ZengYuanDiscrete}). This invariant was originally investigated over finite abelian groups to establish weighted generalizations of classical Gao's Theorem (see \cite{Grynkiewiczmono}, Chapter 16), and its definition extends naturally to infinite abelian groups. This weighted Davenport constant and weighted universal zero-sum invariant have also been studied for arbitrary infinite abelian groups by the author in \cite{WangcommInAlgebra}. We note that the weighted Davenport constant is a specific instance of the weighted universal zero-sum invariant; readers may refer to \cite{WangcommInAlgebra} for extensive related work. In this paper, we restrict our attention exclusively to the weighted Davenport constant.

Let $F$ and $G$ be abelian groups and let $\Psi \subseteq \mathrm{Hom}(F,G)$ be nonempty. The $\Psi$-weighted Davenport constant ${\rm D}_\Psi(G)$ is defined as the minimal positive integer $\ell$ such that every sequence over $F$ of length $\ell$ admits a nontrivial $\Psi$-weighted zero-sum relation.
For infinite abelian groups $F$ and $G$, a core research problem is to characterize the conditions under which ${\rm D}_\Psi(G)$ is finite. When the weight set $\Psi$ is finite, this problem has been fully resolved in \cite{WangcommInAlgebra}. That paper proves that the finiteness of ${\rm D}_\Psi(G)$ is equivalent to $F$ admitting a finite covering by cosets of the kernels of homomorphisms from $\Psi$. Using the Neumann Covering Theorem for groups, the author derived necessary and sufficient conditions for ${\rm D}_{\Psi}(G)<\infty$ in terms of the weight set $\Psi$ (for finite $\Psi$), and obtained the following theorem.

\noindent{\bf Theorem B.} \cite{WangcommInAlgebra} {\sl Let $F, G$ be abelian groups, and let $\Psi$ be a nonempty finite subset of  ${\rm Hom}(F, G)$. Then the following conditions are equivalent.

(i) ${\rm D}_{\Psi}(G)<\infty$;

(ii) There exists a finite cover of the group $F$ by left cosets of ${\rm Ker} \ \psi$ ($\psi\in \Psi$), i.e.,
$$F=\bigcup\limits_{\psi\in \Psi}\left(\bigcup\limits_{i=1}^{k_{\psi}}(a_{\psi, i}+ {\rm Ker}  \ \psi)\right),$$ where $k_{\psi}\in \mathbb{N}$ and $a_{\psi, 1}, \ldots, a_{\psi, k_{\psi}}\in F$ for each $\psi\in \Psi$;

(iii) $|{\rm Im} \ \tau|<\infty$ for some $\tau\in \Psi$.}

However, when \(\Psi\) is infinite, the finite-weight coset-cover
characterization no longer directly applies.  In particular, even if
\({\rm D}_{\Psi}(G)<\infty\), the corresponding cover of \(F^n\) by kernels
of induced homomorphisms need not admit a finite subcover.

The main idea of this paper is to reinterpret the weighted Davenport
constant through the kernels of induced universal homomorphisms.  Given
$$
\boldsymbol{\eta}=(\psi_1,\ldots,\psi_n)
\in(\Psi\cup\{\mathbf 0\})^n,
$$
the universal property of direct sums gives an induced homomorphism
$$
\operatorname{ind}[\boldsymbol{\eta}]:F^n\to G,
\qquad
\operatorname{ind}[\boldsymbol{\eta}](x_1,\ldots,x_n)
=
\sum_{i=1}^{n}\psi_i(x_i).
$$
Thus weighted zero-sum relations over sequences of length \(n\) are
encoded by membership in kernels of such induced homomorphisms.

Our first main result shows that
${\rm D}_{\Psi}(G)\leq n$
if and only if \(F^n\) is covered by the kernels of the nonzero induced
homomorphisms arising from tuples in $(\Psi\cup\{\mathbf 0\})^n.$
This gives a geometric formulation of the weighted Davenport constant as
a kernel-cover threshold condition.

Motivated by this interpretation, we introduce the notion of
\emph{kernel-cover compactness}.  This is a closed-cover finiteness
property for the distinguished family of induced kernels.  It is not
ordinary topological compactness, but a compactness-type property adapted
to the kernel geometry of the weighted Davenport problem.  We prove that
finite subcovers of the canonical kernel cover are equivalent to finite
reduction of the weight set \(\Psi\).  We also give structural sufficient
conditions, including the finite effective quotient condition
$$
F/N_{\Psi}\ \text{finite},
\qquad
N_{\Psi}=\bigcap_{\psi\in\Psi}\ker\psi,
$$
which implies uniform kernel-cover compactness, i.e., every subset of $F^n$ is compact with respect to covers by the distinguished family
of induced kernels.

\section {Notation}

We use the standard notation and terminology for sequences over abelian
groups, following \cite{GaoGeroldingersurvey} and [\cite{GH}, Chapter 5]. Let ${\cal F}(G)$
be the free commutative monoid, multiplicatively written, with basis
$G$. By $T\in {\cal F}(G)$, we mean that $T$ is a finite unordered sequence over $G$, with
repetition allowed.

Denote $[x,y]=\{z\in \mathbb{Z}: x\leq z\leq y\}$ for integers $x,y\in \mathbb{Z}$. Say
$T=a_1a_2\cdot\ldots\cdot a_{\ell}$, where $a_i\in G$ for $i\in [1,\ell]$.
The sequence $T$ can be also denoted as  $\prod\limits_{a\in G}a^{{\rm v}_a(T)},$ where ${\rm v}_a(T)$ is a nonnegative integer and
means that the element $a$ occurs ${\rm v}_a(T)$ times in the sequence $T$. By $|T|$ we denote the length of the sequence, i.e., $|T|=\sum\limits_{a\in G}{\rm v}_a(T)=\ell.$ By $\varepsilon$ we denote the empty sequence in ${\cal F}(G)$ with $|\varepsilon|=0$.
By ${\rm supp}(T)$  we denote the set consisting of all distinct
elements which occur in $T$. We call $T'$
a subsequence of $T$ if ${\rm v}_a(T')\leq {\rm v}_a(T)$, for each element $a\in G,$ denoted by $T'\mid T,$ moreover, we write $T^{''}=T\cdot  T'^{-1}$ to mean the unique subsequence of $T$ with $T'\cdot T^{''}=T$.
Let $\sigma(T)=a_1+\cdots +a_{\ell}$ be the sum of all terms from $T$. In particular, we adopt the convention that $\sigma(\varepsilon)=0_G$.
Let $\Sigma(T)$ be the set consisting of the elements of $G$ that can be represented as a sum of one or more terms from $T$, i.e., $\Sigma(T)=\{\sigma(T'): T' \mbox{ is taken over all nonempty subsequences of }T\}$.
A sequence $T'$ is said to be equal to $T$, denoted $T'=T$, if ${\rm v}_a(T')={\rm v}_a(T)\ \ \mbox{for each element}\ \ a\in G.$ We call $T$ a {\bf zero-sum} sequence if $\sigma(T)=0$, and furthermore, call $T$ a {\bf minimal zero-sum} sequence, if $T$ is a nonempty zero-sum sequence and $T$ contains no nonempty proper ($\neq T$) zero-sum subsequence. We call $T$ a {\bf zero-sum free} sequence, if $T$ contains no nonempty zero-sum subsequence.

\begin{definition}\label{Definition minimal set}  (see \cite{GaoLiPengWang}, Section 4) Let $G$ be a finite abelian group, and let $t>0$ be an integer.  A set $\Omega\subset \mathcal B (G)$ is called {\bf minimal} with respect to $t$ provided that ${\mathsf d}_{\Omega}(G)=t$ and ${\mathsf d}_{\Omega'}(G)\neq t$ for any subset $\Omega'\subsetneq\Omega$.
In particular, if $G$ is finite and $t={\rm D}(G)$ then we just call $\Omega$ a minimal set for short.
\end{definition}

\begin{remark}\label{remark} By definition, we have that ${\mathsf d}_{\Omega_1}(G)\geq {\mathsf d}_{\Omega_2}(G)$ for any $\Omega_1\subset \Omega_2$.
\end{remark}

Let $F$ and $G$ be abelian groups, and let ${\rm Hom}(F, G)$ be the group consisting of all homomorphisms from $F$ to $G$. Let $\Psi$ be a nonempty subset of ${\rm Hom}(F, G)$.
Let $T\in \mathcal{F}(F)$.
Let $V=a_1\cdot\ldots\cdot a_k$ be a subsequence of $T$ ($k\geq 0$), and let $\psi_1,\ldots, \psi_{k}\in \Psi$ (not necessarily distinct). The sequence $\prod\limits_{i\in [1,k]}\psi_i(a_i)$ is called a $\Psi$-subsequence of $T$ (noticing that $\prod\limits_{i\in [1,k]}\psi_i(a_i)$ is a sequence over $G$). In particular, $\prod\limits_{i\in [1,k]}\psi_i(a_i)$ is called a {\bf $\Psi$-zero-sum} subsequence of $T$ if $\sum\limits_{i\in [1,k]}\psi_i(a_i)=0_G$. If $T$ has no nonempty $\Psi$-zero-sum subsequence, we say $T$ is  $\Psi$-zero-sum free.  To make the above definition more precise, we restate them as follows.
Denote
$$\Psi(V)=\{L\in \mathcal{F}(G): L=\psi_1(a_1)\cdot\ldots\cdot \psi_k(a_k) \mbox{ for some } \psi_1,\ldots,\psi_k\in \Psi\}.$$
Any sequence $W\in \bigcup\limits_{V\mid T} \Psi(V)$ is said to be a $\Psi$-subsequence of $T$, in particular, if $\sigma(W)=0_G$ then $W$ is called a $\Psi$-zero-sum subsequence of $T$. If $(\bigcup\limits_{V\mid T} \Psi(V))\cap \mathcal B (G)=\{\varepsilon\}$ then, the sequence $T$ is called $\Psi$-zero-sum free.

Note that for the case $F=G$ and $\Psi$ consisting of only the automorphism $1_G$, the terminology $\Psi$-zero-sum ($\Psi$-zero-sum free) is consistent to zero-sum (zero-sum free).

We now recall the definition of the weighted Davenport constant.

\begin{definition} \cite{ZengYuanDiscrete, WangcommInAlgebra} \ {\sl Let $F$ and $G$ be abelian groups. For any nonempty subset $\Psi$ of ${\rm Hom}(F, G)$, we define the $\Psi-$weighted Davenport constant of $G$, denoted ${\rm D}_{\Psi}(G)$, to be the least positive integer $\ell$ (if it exists, otherwise we let ${\rm D}_{\Psi}(G)=\infty$)
such that every sequence $T$ over $F$ of length
$\ell$ is not $\Psi$-zero-sum free,  i.e.,  there exists $a_1\cdot\ldots\cdot a_k\mid T$ ($k>0$),
such that $\sum\limits_{i=1}^{k}  \psi_i(a_i)=0_G$ for some $\psi_1,\ldots,\psi_{k}\in \Psi$.}
\end{definition}

\section {Groups in which $\mathcal{A}(G)$ is minimal to represent Davenport constant}

\begin{lemma} \cite{EmdeBoas007, Olson1}\label{Lemma Davenport precise value} \ Let $G\cong \mathbb{Z}_{n_1}\oplus \cdots  \oplus \mathbb{Z}_{n_r}$ with $1< n_1 \mid\cdots\mid n_r$. Then
${\rm D}(G)\geq 1 +\sum\limits_{i=1}^r (n_i-1)$. Moreover, equality holds if one of the following conditions holds:
(i) $r\leq 2$; (ii) $G$ is a $p$-group.
\end{lemma}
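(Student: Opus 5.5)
The lower bound comes from an explicit zero-sum free sequence. Write $G=\bigoplus_{i=1}^r \mathbb{Z}_{n_i}$ with generators $e_1,\ldots,e_r$, where $\mathrm{ord}(e_i)=n_i$, and set
$$T=\prod_{i=1}^r e_i^{\,n_i-1}.$$
A nonempty subsequence of $T$ has the form $\prod e_i^{k_i}$ with $0\le k_i\le n_i-1$, not all $k_i=0$. Its sum is $\sum k_ie_i$, and by directness of the sum this is zero only if $n_i\mid k_i$ for every $i$, which forces all $k_i=0$. So $T$ is zero-sum free of length $\sum(n_i-1)$, and therefore ${\rm D}(G)\ge 1+\sum(n_i-1)$.

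For the upper bound in case (ii), let $G$ be a $p$-group, so each $n_i=p^{a_i}$. I would use Olson's group-ring argument. Put $M=1+\sum(n_i-1)$ and take any sequence $g_1\cdots g_M$ over $G$. In the group ring $\mathbb{F}_p[G]$, consider the product
$$P=\prod_{j=1}^M(1-X^{g_j}).$$
Expanding gives $P=\sum_{J}(-1)^{|J|}X^{\sigma(J)}$, summed over all subsets $J\subseteq[1,M]$. The key claim is that $P=0$. To see this, note that the augmentation ideal $I$ of $\mathbb{F}_p[G]$ is generated by the elements $1-X^{e_i}$, and that $(1-X^{e_i})^{n_i}=1-X^{n_ie_i}=0$ in characteristic $p$, because $n_i$ is a power of $p$. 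Each factor $1-X^{g_j}$ lies in $I$, so $P$ is a product of $M$ elements of $I$. Expanding that product in terms of the generators $y_i=1-X^{e_i}$, every monomial has total degree at least $M>\sum(n_i-1)$. By pigeonhole, some $y_i$ then appears with exponent at least $n_i$, so every monomial vanishes and $P=0$.

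Now suppose, for contradiction, that the sequence were zero-sum free. Then the only subset $J$ with $\sigma(J)=0$ is $J=\emptyset$, so the coefficient of $X^0$ in $P$ is exactly $1\neq 0$. This contradicts $P=0$, so the sequence has a nonempty zero-sum subsequence.

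For case (i), $r\le 2$, the case $r=1$ is elementary: among the prefix sums $s_0=0,s_1,\ldots,s_n$ of a sequence of length $n$ in $\mathbb{Z}_n$, two coincide, and their difference is a nonempty zero-sum block. For $r=2$, write $G=\mathbb{Z}_m\oplus\mathbb{Z}_{mn}$; the target is ${\rm D}(G)=m+mn-1$. I would proceed by induction on the number of prime factors of $m$. The $p$-group case already settles $\mathbb{Z}_p\oplus\mathbb{Z}_{pn}$-type bases when combined with the standard inductive lemma
$${\rm D}(G)\le {\rm D}(H)\,{\eta\text{-type bound}}.$$
Concretely, the plan has three steps. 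First, show $\eta(\mathbb{Z}_p^2)\le 3p-2$ and ${\rm s}(\mathbb{Z}_p^2)\le 4p-3$ via Chevalley--Warning, i.e.\ the Erd\H{o}s--Ginzburg--Ziv / Kemnitz method. Second, with $H=\mathbb{Z}_{m/p}\oplus\mathbb{Z}_{mn/p}$ and $G/pG$-type reductions, repeatedly extract short zero-sum subsequences modulo a subgroup $\cong\mathbb{Z}_p^2$. Third, apply the induction hypothesis to the sequence formed by their sums, which lies in a group isomorphic to $H$.

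The hard step is the $r=2$ case. Getting exactly $m+mn-1$, rather than a weaker bound, requires care in choosing the chain of subgroups and controlling lengths of the extracted blocks. Since the paper cites \cite{EmdeBoas007, Olson1}, I would in practice treat these results as known and only sketch them as above.
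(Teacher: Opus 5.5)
The paper gives no proof of this lemma; it only cites van Emde Boas--Kruyswijk and Olson. Your lower-bound sequence $\prod_i e_i^{n_i-1}$ is correct, and so is your group-ring argument in $\mathbb{F}_p[G]$, which is Olson's proof using $(1-X^{e_i})^{n_i}=0$ and the vanishing of products of $M$ augmentation-ideal elements. The case $r=1$ is also fine.

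The genuine gap is the case $r=2$ for groups that are not $p$-groups. What you wrote there is not yet an argument, and parts of it are inconsistent:
\begin{itemize}
\item $\mathbb{Z}_p\oplus\mathbb{Z}_{pn}$ is not a $p$-group unless $n$ is a power of $p$, so the $p$-group case does not settle it.
\item Your displayed ``standard inductive lemma'' is not a well-formed inequality.
\item If you extract blocks that are zero-sum modulo a \emph{subgroup} $K\cong\mathbb{Z}_p^2$, their sums lie in $K$, not in a copy of $H$.
\end{itemize}
The consistent setup is $H=pG\cong\mathbb{Z}_{m/p}\oplus\mathbb{Z}_{mn/p}$, with quotient $G/H\cong\mathbb{Z}_p^2$.

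More importantly, even with that setup the generic bound $\mathrm{D}(G)\le \exp(G/H)(\mathrm{D}(H)-1)+\eta(G/H)$ gives only
$p(m/p+mn/p-2)+3p-2=m+mn+p-2$, which is off by $p-1$. You note that ``care'' is needed but do not supply the idea. What is missing is control of block lengths, so that each block with sum in $H$ costs on average exactly $p$ terms. The classical route is as follows.
\begin{itemize}
\item Apply Chevalley--Warning to $\sum a_ix_i^{p-1}=\sum b_ix_i^{p-1}=\sum x_i^{p-1}=0$ in $3p-2$ variables. This shows that any $3p-2$ elements of $\mathbb{Z}_p^2$ have a zero-sum subsequence of length exactly $p$ or $2p$.
\item A block of length $2p>\mathrm{D}(\mathbb{Z}_p^2)$ splits into two nonempty blocks, each with sum in $H$.
\item Start from $|S|=m+mn-1=p\,\mathrm{D}(H)+p-1$. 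After $k$ blocks the remainder has length $p(\mathrm{D}(H)-k)+p-1$, which is $\ge 3p-2$ exactly when $\mathrm{D}(H)-k\ge 2$.
\item Each step adds one or two blocks, so at termination $\mathrm{D}(H)-k\in\{0,1\}$. In the latter case the remaining $2p-1=\mathrm{D}(\mathbb{Z}_p^2)$ terms give the last block.
\item The $\mathrm{D}(H)$ block sums then yield a nonempty zero-sum subsequence by the induction hypothesis.
\end{itemize}

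Your other ingredient, $\mathsf{s}(\mathbb{Z}_p^2)\le 4p-3$, would also work with similar bookkeeping, finishing with $\eta(\mathbb{Z}_p^2)$ and $\mathrm{D}(\mathbb{Z}_p^2)$ for the last two blocks. However, it is Reiher's theorem on Kemnitz's conjecture, not a routine Chevalley--Warning computation, and it is far more than is needed.
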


\begin{lemma} [see \cite{GH}, Proposition 5.1.11] \label{Lemma:DavenportOfQuotient}  Let $G$ be a finite abelian group, and let $H$ be a subgroup of $G$.
Then $D(G)\geq  D(H)+D(G/H)-1$.
\end{lemma}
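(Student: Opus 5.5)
The plan is to deduce the inequality from an explicit zero-sum free sequence. We start from a zero-sum free sequence over $H$ of maximal length and a sequence over $G$ whose image in $G/H$ is zero-sum free of maximal length, and glue them together. Recall that ${\rm D}(K)-1$ is the maximal length of a zero-sum free sequence over a finite abelian group $K$. So it suffices to construct a zero-sum free sequence over $G$ of length $({\rm D}(H)-1)+({\rm D}(G/H)-1)$.

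Let $\pi:G\to G/H$ be the canonical epimorphism. Choose a zero-sum free sequence $S=h_1\cdot\ldots\cdot h_s$ over $H$ with $s={\rm D}(H)-1$. Choose a zero-sum free sequence $\bar U=\bar g_1\cdot\ldots\cdot\bar g_t$ over $G/H$ with $t={\rm D}(G/H)-1$, and fix lifts $g_i\in G$ with $\pi(g_i)=\bar g_i$. Set $T=S\cdot U$ with $U=g_1\cdot\ldots\cdot g_t$, regarded as a sequence over $G$ of length $s+t$.

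Suppose $W\mid T$ is a nonempty zero-sum subsequence, and write $W=W_S\cdot W_U$ with $W_S\mid S$ and $W_U\mid U$.
\begin{itemize}
\item Applying $\pi$, and using that $\pi(h)=0$ for $h\in H$, we get $\sigma(\pi(W_U))=\pi(\sigma(W))=0$ in $G/H$.
\item Since $\bar U$ is zero-sum free, this forces $W_U=\varepsilon$.
\item Then $W=W_S$ is a nonempty zero-sum subsequence of $S$, contradicting that $S$ is zero-sum free.
\end{itemize}
Hence $T$ is zero-sum free, and so ${\rm D}(G)\ge |T|+1={\rm D}(H)+{\rm D}(G/H)-1$.

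The degenerate cases $H=\{0\}$ or $H=G$ need no separate treatment: there ${\rm D}(H)=1$ or ${\rm D}(G/H)=1$, the corresponding sequence is empty, and the argument still goes through. The only step requiring any care is the projection argument above, and it is routine, so no real obstacle is expected.
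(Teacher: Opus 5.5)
Your proof is correct, including the existence of zero-sum free sequences of length ${\rm D}(K)-1$ and the degenerate cases. The paper gives no proof of its own and only cites Proposition 5.1.11 of Geroldinger--Halter-Koch, whose standard argument is the same as yours: concatenate a maximal zero-sum free sequence over $H$ with lifts of a maximal zero-sum free sequence over $G/H$, then project to $G/H$.
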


In what follows of this section, we shall fix one sequence $V$.
Let $H\cong\mathbb{Z}_3^5$ with basis $e_0,e_1,e_2,e_3,e_4$.   Let
\begin{equation}\label{eq:ai}
 a_i=e_i-e_{i-1}-e_{i-2}
\end{equation}
with all indices are read modulo $5$,  i.e., $i\in\mathbb Z/5\mathbb Z$. Define the sequence $V\in \mathcal{F}(H)$ as
\begin{equation}\label{eq:V}
 V=(\prod_{i=0}^{4} a_i)\cdot (\prod_{i=0}^{4}e_i).
\end{equation}
Thus $|V|=10=D(H)-1$.

Then we shall give one lemma to state some properties on the sequence $V$.

\begin{lemma}\label{Lemma:Vatom} Let $H\cong \mathbb{Z}_3^5$, and let $V$ be given as \eqref{eq:ai}. Then the following conclusions hold:

(i) $V\in \mathcal A(H)$;

(ii) $\sum(V)=H$;

(iii)  For every $x\in H$, the sequence $V\cdot x$ contains a minimal zero-sum
subsequence distinct from $V$.
\end{lemma}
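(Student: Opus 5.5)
The plan is to translate every statement about subsequence sums of $V$ into a coordinate computation in the basis $e_0,\dots,e_4$. A subsequence of $V$ is determined by a pair $(I,J)$ of subsets of $\mathbb Z/5\mathbb Z$: it consists of the $a_i$ with $i\in I$ and the $e_j$ with $j\in J$. Write $i_k=[k\in I]$ and $j_k=[k\in J]$. Using \eqref{eq:ai}, the coefficient of $e_k$ in $\sum_{i\in I}a_i+\sum_{j\in J}e_j$ is
$$i_k-i_{k+1}-i_{k+2}+j_k \pmod 3 .$$
Two symmetries will reduce casework. The cyclic shift $e_i\mapsto e_{i+1}$ is an automorphism of $H$ that maps $V$ to itself. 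Also, since $\sigma(V)=0$, a subsequence has sum $y$ exactly when its complement has sum $-y$.

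For (i), first note $\sum_i a_i=\sum e_i-2\sum e_i=-\sum e_i$, so $\sigma(V)=0$. Now suppose $(I,J)$ gives a zero-sum subsequence. Then for every $k$ we have $j_k\equiv i_{k+1}+i_{k+2}-i_k\pmod 3$, and we need $j_k\in\{0,1\}$.
\begin{itemize}
\item The right-hand side is $\equiv 2$ exactly when the triple $(i_k,i_{k+1},i_{k+2})$ is $(0,1,1)$ or $(1,0,0)$. So the cyclic binary word $(i_0,\dots,i_4)$ must avoid the patterns $011$ and $100$, and then $J$ is determined by $I$.
\item Suppose the word is not constant, and take a position with $i_k=0$, $i_{k+1}=1$. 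Avoiding $011$ forces $i_{k+2}=0$. Avoiding $100$ at position $k+1$ then forces $i_{k+3}=1$, and so on. The word is therefore forced to alternate, which is impossible on a cycle of odd length $5$.
\item Hence $I=\emptyset$, which gives $J=\emptyset$, or $I$ is everything, which gives $j_k\equiv 1$ for all $k$, i.e.\ the whole of $V$.
\end{itemize}
So $V$ is a minimal zero-sum sequence.

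For (ii), the value $0$ is attained by $V$ itself. For a target $x=\sum c_ke_k\neq 0$, we must find a binary word $I$ such that $c_k+i_{k+1}+i_{k+2}-i_k\in\{0,1\}\pmod 3$ for all $k$; then $J$ is read off. I would cut down the $242$ nonzero targets using the cyclic symmetry and the symmetry $x\leftrightarrow -x$ (taking complements), then handle the remaining orbit representatives by a direct, possibly computer-assisted, check over the $32$ words $I$. I expect this exhaustive verification to be the main obstacle. If a conceptual route appears, for example a counting argument showing that the map $(I,J)\mapsto$ sum from the $2^{10}$ pairs hits every class, I would use it instead. Otherwise a transparent case table will do.

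For (iii), first suppose $x\notin{\rm supp}(V)$. If $x=0$, the one-term sequence $x$ already works. Otherwise, by (ii) there is a nonempty $W\mid V$ with $\sigma(W)=-x$, and $W\neq V$ because $x\neq 0$. By (i), $W$ is zero-sum free, so $W\cdot x$ is zero-sum and every zero-sum subsequence of it contains $x$. A minimal zero-sum subsequence $U$ of $W\cdot x$ therefore contains $x$ and is distinct from $V$, since $x\notin{\rm supp}(V)$. If $x\in{\rm supp}(V)$, by the cyclic symmetry it suffices to treat $x=e_0$ and $x=a_0$. Here I would exhibit explicitly a subsequence $W\mid V\cdot x^{-1}$ with $\sigma(W)=-x$ and $W\neq V\cdot x^{-1}$, again using the pattern description above. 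Then $W\cdot x$ is a zero-sum subsequence of $V\cdot x$ with $|W\cdot x|<|V|$, so any minimal zero-sum subsequence inside it has length $<|V|$ and is therefore different from $V$.
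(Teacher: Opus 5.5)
Your treatment of (i) is the paper's argument: the forbidden cyclic patterns $011$ and $100$, followed by the odd-cycle obstruction. For (ii), the paper also only asserts ``a straightforward calculation''. The claim does hold, so your symmetry-reduced exhaustive check would go through. Your case $x\notin{\rm supp}(V)$ in (iii) matches the paper. It also adds the useful remark that $W$ is zero-sum free, so the minimal zero-sum subsequence must contain $x$.

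The gap is in the case $x\in{\rm supp}(V)$. You look for $W\mid V\cdot x^{-1}$ with $\sigma(W)=-x$ and $W\neq V\cdot x^{-1}$, but no such $W$ exists. Indeed, $W\cdot x$ would then divide $V$ and be a nonempty zero-sum sequence different from $V$, contradicting (i).

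More generally, any zero-sum subsequence of $V\cdot x$ that uses $x$ at most once is a zero-sum subsequence of $V$, hence equals $\varepsilon$ or $V$. So the witness you need \emph{must contain $x^2$}. The $0/1$ pattern description of subsequences of $V$ that you planned to reuse cannot produce it.

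The repair is to take $W\mid V$ with $x\mid W$ and $\sigma(W)=-x$. Then $W\neq V\cdot x^{-1}$ forces $|W|\le 8$, because a length-$9$ subsequence of $V$ with sum $-x$ must be $V\cdot x^{-1}$, and $V$ itself has sum $0\neq -x$. With this bound your length argument works.

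You still have to exhibit the witnesses, which the paper does explicitly:
\begin{itemize}
\item for $x=e_i$, the sequence $e_i^2a_ie_{i-1}e_{i-2}$;
\item for $x=a_i$, the sequence $a_i^2a_{i-1}e_ie_{i-1}e_{i+2}$.
\end{itemize}
Both are zero-sum subsequences of $V\cdot x$; in fact they are minimal, of lengths $5$ and $6$.
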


\begin{proof}
(i) Summing \eqref{eq:ai} over $i\in\mathbb Z/5\mathbb Z$ gives
$\sum_{i=0}^{4}(a_i+e_i)=0,$ i.e., $V$ is a zero-sum sequence. It remains to show that $V$ is a minimal zero-sum sequence. Take an arbitrary nonempty zero-sum subsequence $U$ of $V$. By \eqref{eq:V}, we may write
$U=\prod_{i=0}^{4}a_i^{\gamma_i} \cdot \prod_{i=0}^{4} e_i^{\beta_i}
 \quad\text{with}\quad
 \gamma_i,\beta_i\in\{0,1\}$.  By \eqref{eq:ai} and by
comparing the coefficient of $e_i$, we derive that
\begin{equation}\label{eq:beta}
 \beta_i\equiv -\gamma_i+\gamma_{i+1}+\gamma_{i+2}
 \pmod 3
\end{equation} for each $i\in\mathbb Z/5\mathbb Z$.
Since $\beta_i\in\{0,1\}$, i.e., the right-hand side in \eqref{eq:beta} cannot be congruent to $2$ modulo $3$, it follows that the $3$-length binary word
$\gamma_i\gamma_{i+1}\gamma_{i+2}$ cannot belong to $\{011, 100\}$, where $i\in \mathbb{Z}\diagup 5\mathbb{Z}$. That is,
the {\bf cyclic} binary word $\gamma_0\gamma_1\cdots\gamma_4$ contains
neither one of $\{011, 100\}$.

Suppose this binary word $\gamma_0\gamma_1\cdots\gamma_4$ is nonconstant, i.e., $\gamma_0\gamma_1\cdots\gamma_4\notin \{00000,11111\}.$ Then every cyclic run of $1$'s would
have length one, since $011$ is forbidden, and every cyclic run of
$0$'s would have length one, since $100$ is forbidden.  Therefore, the {\sl cyclic} word $\gamma_0\gamma_1\cdots\gamma_4$
would be alternate by $0$ and $1$ occurring with the subindex run over $\mathbb{Z}\diagup 5\mathbb{Z}$, which is impossible on an odd cycle of length
five.  Hence, $$\gamma_0\gamma_1\cdots\gamma_4\in \{00000,11111\}.$$

Combined with \eqref{eq:beta}, we conclude that $\beta_0\beta_1\cdots\beta_4=00000$ in case of $\gamma_0\gamma_1\cdots\gamma_4=00000$,  and $\beta_0\beta_1\cdots\beta_4=11111$ in case of $\gamma_0\gamma_1\cdots\gamma_4=11111$. Then either
$U$ is an empty sequence or $U=V$.  Therefore $V$ is a minimal zero-sum sequence, which proves (i).

(ii) The conclusion follows from a  straightforward calculation.

(iii) If $x=0$, the one-term sequence $0$ is the desired minimal zero-sum subsequence distinct from $V$.

Suppose $x\neq 0$ and $x\notin{\rm supp}(V)$.  By
Conclusion (ii), there is a subsequence $U\mid V$ with
$\sigma(U)=-x$.  Since $x\neq 0$, the subsequence $U$ is nonempty and $U\neq V$.
Take a minimal zero-sum subsequence $W$ of $x\cdot U$. Then $W$ is the desired subsequence distinct from $V$, done.

Hence, it remains to consider $x\in{\rm supp}(V)$.
Since the ten terms in \eqref{eq:V}
are all distinct, then $x$ is either equal to $e_i$ or $a_i$ for some $i\in\mathbb Z/5\mathbb Z$.  In case that $x=e_i$ or $x=a_i$, we can check that $e_i^2a_ie_{i-1}e_{i-2}$ or $a_i^2a_{i-1}e_ie_{i-1}e_{i+2}$ is the desired minimal zero-sum subsequence of $V\cdot x$ which is distinct from $V$, respectively. This proves (iii).
\end{proof}

\begin{lemma}\label{Lemma:lifting}
Let $H$ be a subgroup of a finite abelian group $G$, and let
$W\in\mathcal{A}(H)$.  Then we have $d_{\mathcal A(G)\setminus\{W\}}(G)=D(G)$ provided that the following three conditions hold:

(i) $|W|=D(H)-1$;

(ii) $\Sigma(W)=H$;

(iii) For every $x\in H$, the sequence $W\cdot x$ contains a minimal zero-sum subsequence distinct
      from $W$.
\end{lemma}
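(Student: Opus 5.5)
The plan is to prove the two inequalities separately. Since $\mathcal A(G)\setminus\{W\}\subset\mathcal A(G)$, Remark \ref{remark} gives $d_{\mathcal A(G)\setminus\{W\}}(G)\ge d_{\mathcal A(G)}(G)=D(G)$. So it suffices to show the following: every sequence $T\in\mathcal F(G)$ of length $D(G)$ contains a minimal zero-sum subsequence different from $W$ (different as an element of $\mathcal F(G)$).

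Fix such a $T$. If $W\nmid T$, any minimal zero-sum subsequence of $T$ (one exists since $|T|=D(G)$) is already $\neq W$. So assume $W\mid T$ and set $S=T\cdot W^{-1}$. By (i) and Lemma \ref{Lemma:DavenportOfQuotient},
$$|S|=D(G)-D(H)+1\ge D(G/H).$$
Let $\varphi:G\to G/H$ be the canonical map. Then $\varphi(S)$ has a minimal zero-sum subsequence, which we lift to $S_1\mid S$. Thus $S_1$ is nonempty, $h:=\sigma(S_1)\in H$, and $\varphi(S_1)\in\mathcal A(G/H)$.

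Next apply (iii) to $x=h$. This gives $W'\in\mathcal A(H)$ with $W'\mid W\cdot h$ and $W'\neq W$. Since $W$ is minimal zero-sum, $W'$ cannot divide $W$, so $W'=h\cdot U$ with $U\mid W$. (Condition (ii) is what makes this kind of completion possible; in the present route it is absorbed into (iii).) We then claim $S_1\cdot U$, which divides $T$, is a minimal zero-sum sequence. It is zero-sum because $\sigma(U)=-h$. For minimality, let $Y=Y_1Y_2$ be a nonempty zero-sum subsequence with $Y_1\mid S_1$ and $Y_2\mid U$. Then $\varphi(Y_1)$ is zero-sum, so by minimality of $\varphi(S_1)$ we have $Y_1\in\{\varepsilon,S_1\}$.
\begin{itemize}
\item If $Y_1=\varepsilon$, then $Y_2$ is a nonempty zero-sum subsequence of $U$. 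Since $U$ is a proper subsequence of $W'$, this contradicts $W'\in\mathcal A(H)$. (This also covers $h=0$, where $W'=0$ and $U=\varepsilon$.)
\item If $Y_1=S_1$, then $\sigma(Y_2)=-h$, so $h\cdot Y_2$ is a nonempty zero-sum subsequence of $W'$. Hence $Y_2=U$ and $Y=S_1U$.
\end{itemize}

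The main obstacle is the last point: showing $S_1U\neq W$ as elements of $\mathcal F(G)$, since $S_1$ lies outside the chosen copy of $W$ but could coincide with terms of $W$. Suppose $S_1U=W$. Then every term of $S_1$ lies in $H$, so $\varphi(S_1)$ is a sequence of zeros. Being minimal zero-sum in $G/H$, it has length one, so $S_1=h$. But then $S_1U=hU=W'\neq W$, a contradiction. Hence $T$ contains the minimal zero-sum subsequence $S_1U\neq W$, which gives $d_{\mathcal A(G)\setminus\{W\}}(G)\le D(G)$ and finishes the argument.
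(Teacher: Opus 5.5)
Your proof is correct, and it differs from the paper's in how the zero-sum block from $T W^{-1}$ is completed by a piece of $W$.

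The paper splits into two cases. If $R=T W^{-1}$ has a term $x\in H$, it applies (iii) to that $x$. Otherwise, it takes a shortest nonempty $Y\mid R$ with $\sigma(Y)\in H$ and, via (ii), a shortest $U\mid W$ with $\sigma(U)=-\sigma(Y)$. Minimality of $UY$ follows from the two length-minimality choices, and $UY\neq W$ is automatic because $Y$ has terms outside $H$.

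You avoid the case split. You choose $S_1$ with $\varphi(S_1)\in\mathcal A(G/H)$ and obtain $U$ from (iii) applied to $h=\sigma(S_1)$, so that $hU\in\mathcal A(H)$. Minimality of $S_1U$ then comes from the atom property on both sides. The price is the extra distinctness step: $S_1$ may contain terms of $H$, so you must argue that $S_1U=W$ would force $S_1=h$ and hence $S_1U=W'$. You handle this correctly.

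What your route buys is that it never uses (ii). In fact your derivation $W'=hU$ with $U\mid W$ shows that (iii) implies (ii): for $x\neq 0$ it gives $-x=\sigma(U)\in\Sigma(W)$ with $U\neq\varepsilon$, and $0=\sigma(W)\in\Sigma(W)$. So hypothesis (ii) of the lemma is redundant. The paper's route is slightly more direct in its main case, since distinctness from $W$ needs no argument there.
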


\begin{proof}
The sequence $W$, viewed as a sequence over $G$, remains a minimal zero-sum sequence.  By Remark \ref{remark}, we have
$d_{\mathcal A(G)\setminus\{W\}}(G)\geq  d_{\mathcal A(G)}(G)=D(G)$, so it remains to prove the opposite inequality.

Let $T$ be a sequence over $G$ of length $D(G)$.  It suffices to show that $T$ contains a minimal zero-sum subsequence distinct from $W$.
If $W\nmid T$, the conclusion follows immediately. Hence, we suppose that $W\mid T$ and we may
write $$T=W\cdot R.$$
Since $|W|=D(H)-1$, it follows from Lemma \ref{Lemma:DavenportOfQuotient} that
$$|R|=|T|-|W|=D(G)-(D(H)-1)\geq D(G/H).$$

If $R$ contains a term $x\in H$, then Condition (iii), applied to the
subsequence $W\cdot x$, gives a minimal zero-sum subsequence distinct from $W$. Hence, we assume that every term of $R$ lies outside $H$.

Let
$\pi:G\to G/H$ be the canonical epimorphism of $G$ onto the quotient group $G/H$.
Since $|\pi(R)|=|R|\geq D(G/H)$, the sequence
$\pi(R)\in \mathcal{F}(G/H)$ has a nonempt zero-sum subsequence. Then we take a {\bf nonempty} subsequence $Y$ of $R$ with $\sigma(Y)\in H$ such that  $|Y|$ is {\bf minimal}.
By Condition (ii), we may take a subsequence $U$ of $W$ with $\sigma(U)=-\sigma(Y)$ such that $|U|$ is {\bf minimal}. Then $U\cdot Y$ is
a nonempty zero-sum subsequence of $T$. Since all terms of $U$ belongs to $H$ and all terms of $Y$ lies outside $H$, it follows from the minimality of both $|U|$ and $|Y|$ that $U\cdot Y$ is a minimal zero-sum subsequence of $T$. Since $Y$ is nonempty, then $U\cdot Y$ is distinct from $W$, completing the proof of the lemma.
\end{proof}

Now we are in a position to prove our main theorem for this section.

\begin{theorem}\label{theorem in group exp(G)geq 6} \ Let $G\cong \mathbb{Z}_3^r$ with $r\geq 1$. Then $\mathcal A (G)$ is a minimal set with respect to the Davenport constant ${\rm D}(G)$ if and only if $r\in [1,4]$.
\end{theorem}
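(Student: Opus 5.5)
For the ``only if'' direction I will argue by contraposition: assume $r\ge 5$ and show that $\mathcal A(G)$ is not minimal. Write $G=H\oplus K$ with $H\cong\mathbb Z_3^5$ and $K\cong \mathbb Z_3^{r-5}$, and take $W=V$ as in \eqref{eq:V}. Lemma~\ref{Lemma Davenport precise value} applies because $H$ is a $3$-group, so it gives ${\rm D}(H)=11$. Hence $|V|=10={\rm D}(H)-1$. Lemma~\ref{Lemma:Vatom} then supplies the remaining hypotheses of Lemma~\ref{Lemma:lifting}: $V\in\mathcal A(H)$, $\Sigma(V)=H$, and property (iii). Lemma~\ref{Lemma:lifting} yields ${\mathsf d}_{\mathcal A(G)\setminus\{V\}}(G)={\rm D}(G)$. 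So $\Omega'=\mathcal A(G)\setminus\{V\}\subsetneq\mathcal A(G)$ still represents ${\rm D}(G)$, and $\mathcal A(G)$ is not minimal. This direction is essentially formal given the earlier lemmas.

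For the ``if'' direction, let $r\in[1,4]$, so that ${\rm D}(G)=2r+1$ by Lemma~\ref{Lemma Davenport precise value}. By Remark~\ref{remark}, it suffices to show that for every $W\in\mathcal A(G)$ we have ${\mathsf d}_{\mathcal A(G)\setminus\{W\}}(G)>{\rm D}(G)$. Equivalently, I must exhibit a sequence $T$ of length $2r+1$ whose only minimal zero-sum subsequence is $W$.

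If $|W|=2r+1$, take $T=W$. Otherwise write $W=g\cdot W'$, where $W'$ is zero-sum free. I will look for a sequence $S$ with $|S|=2r+1-|W|$ satisfying two conditions:
\begin{itemize}
\item[(a)] $W'\cdot S$ is zero-sum free, of length $2r={\rm D}(G)-1$;
\item[(b)] the only subsequence of $W'\cdot S$ with sum $\sigma(W')=-g$ is $W'$ itself.
\end{itemize}
Then I set $T=g\cdot W'\cdot S$. By (a), every zero-sum subsequence of $T$ contains $g$. By (b), such a subsequence must then equal $g\cdot W'=W$, so $T$ has the required property. When $g$ has multiplicity larger than one in $W$, a small care is needed; choosing $g$ as a term of maximal multiplicity should handle it.

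The main obstacle is constructing $S$ for every $W\in\mathcal A(\mathbb Z_3^r)$ with $r\le 4$. This is exactly what fails at $r=5$, as the sequence $V$ shows. My plan is as follows:
\begin{itemize}
\item Apply a change of basis so that $W'$ contains a basis-like part. Minimal zero-sum sequences over $\mathbb Z_3^r$ with $r\le4$ have length at most $9$ and can be normalised up to automorphism.
\item Take $S$ built from doubled basis vectors $e_j^2$ of a complementary subgroup, in the spirit of the extremal zero-sum free sequence $\prod e_j^2$.
\item Check condition (b) by comparing coefficients modulo $3$, as in the proof of Lemma~\ref{Lemma:Vatom}(i).
\end{itemize}
The case analysis is organised by $|W|$ and by the rank of the subgroup $\langle{\rm supp}(W)\rangle$.

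I expect the delicate cases to be those where $\langle{\rm supp}(W)\rangle$ has full rank $r$ but $|W|<2r+1$. In these cases $S$ must lie inside the same subgroup, and uniqueness of the representation of $-g$ is hardest to guarantee. If a uniform argument does not emerge, the fallback is a finite classification of $\mathcal A(\mathbb Z_3^r)$ up to ${\rm Aut}(G)$ for $r\le4$, with $S$ exhibited case by case.
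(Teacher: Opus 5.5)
\textbf{Verdict: there is a gap.} Your proposal proves the $r\ge5$ direction exactly as the paper does, but the converse for $r\in[1,4]$ is only outlined, not proved.

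Your treatment of $r\ge 5$ matches the paper. Since ${\rm D}(H)=11$ by Lemma~\ref{Lemma Davenport precise value}, you get $|V|={\rm D}(H)-1$, and Lemma~\ref{Lemma:Vatom} then feeds $V$ into Lemma~\ref{Lemma:lifting}.

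Your reduction for $r\le 4$ is also sound. It suffices to find, for each $W\in\mathcal A(G)$, a sequence of length $2r+1$ whose only atom (minimal zero-sum subsequence) is $W$. Conditions (a) and (b) guarantee this for $T=g\cdot W'\cdot S$. No extra care is needed when $g$ repeats: a zero-sum $U\mid T$ with ${\rm v}_g(U)=0$ divides $Tg^{-1}=W'S$.

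The gap is that you never establish the existence of $S$. Your doubled-complement device fits only when $|W|={\rm D}(K)$ for $K=\langle{\rm supp}(W)\rangle$. Only then does $|S|=2(r-{\rm rank}\,K)$ equal $2r+1-|W|$. Otherwise it merely reduces the problem to the same problem inside $K$, namely atoms whose support generates the whole group and which are shorter than ${\rm D}$. For ranks $3$ and $4$ that case is the entire content of the statement, and you defer it to an unexecuted ``fallback'' classification.

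This step is not routine. Suppose $|W|=2r$ and $\Sigma(W)=G$. Then $S=x$ is a single element, and no $x\notin{\rm supp}(W)$ can work:
\begin{itemize}
\item $x=0$ is itself an atom;
\item for $x\neq 0$ there is a nonempty $U\mid W$ with $U\neq W$ and $\sigma(U)=-x$. Since $U$ is zero-sum free, $x\cdot U$ contains an atom through $x$, which differs from $W$.
\end{itemize}
Hence you need some $x\in{\rm supp}(W)$ such that $W\cdot x$ contains no second atom. That is exactly what Lemma~\ref{Lemma:Vatom}(iii) rules out for $V$ in rank $5$.

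So the case $r=4$ amounts to showing that no length-$8$ atom of $\mathbb Z_3^4$ with $\Sigma(W)=G$ behaves like $V$. You also need to handle the shorter full-rank atoms for $r=3,4$. This is a genuine finite verification.

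The paper gives no conceptual argument here either: it cites Theorem A for $r\le 2$ and asserts ``a straightforward calculation'' for $r\in\{3,4\}$. To complete your proof you must carry that verification out. For example, list $\mathcal A(\mathbb Z_3^r)$ up to ${\rm Aut}(G)$ for $r=3,4$ and exhibit a suitable $T$ for each class.
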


\begin{proof} The case of $r\in \{1,2\}$ follows from  Theorem A. If $r\in \{3,4\}$, by a straightforward calculation, we can show that $\mathcal A (G)$ is a minimal set with respect to ${\rm D}(G)$.
Now suppose that $r\geq 5$. Write
$G=\langle e_1,\ldots,e_r\rangle\cong\mathbb Z_3^r$
and put
$H=\langle e_1,\ldots,e_5\rangle.$
Then
$H\cong\mathbb Z_3^5$.
Let $V\in\mathcal A(H)$ be the sequence constructed as in  \eqref{eq:V}.
 By Lemma \ref{Lemma:Vatom}, we check that the sequence $V$ satisfies Conditions (i)--(iii) in Lemma \ref{Lemma:lifting} with $W=V$.  Then applying Lemma \ref{Lemma:lifting}, we have that $d_{\mathcal{A}(G)\setminus \{V\}}(G)={\rm D}(G)$. \end{proof}

 Combined with Theorem A, we have the following Theorem immediately which classified all finite abelian groups $G$ such that $\mathcal A (G)$ is minimal.

\begin{theorem} \label{theorem in group exp(G)geq 6} \
Let $G$ be a finite nonzero abelian group. Then $\mathcal A(G)$ is
a minimal set with respect to the Davenport constant $D(G)$ if and
only if one of the following conditions holds:
(i) $G\cong \mathbb Z_2^r$ with $r\geq 1$;
(ii) $G\cong \mathbb Z_3^r$ with $1\le r\leq 4$;
(iii) $G\cong \mathbb Z_4$ or $G\cong \mathbb Z_5$.
\end{theorem}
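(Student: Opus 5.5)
The classification follows by splitting on whether $\exp(G)=3$ and quoting the two results already available. If $\exp(G)\neq 3$, Theorem A applies verbatim. It says $\mathcal A(G)$ is minimal exactly when $G\cong\mathbb Z_4$, $G\cong\mathbb Z_5$, or $G\cong\mathbb Z_2^r$ for some $r\ge 1$. These are conditions (iii) and (i) of the statement. None of these groups has exponent $3$, so they do not overlap with the other case.

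If $\exp(G)=3$, then $G$ is a finite abelian group of exponent $3$. Hence $G$ is an elementary abelian $3$-group, $G\cong\mathbb Z_3^r$ with $r\ge 1$. The preceding theorem for $\mathbb Z_3^r$ then says $\mathcal A(G)$ is minimal if and only if $r\in[1,4]$, which is condition (ii). Since every finite nonzero abelian group either has exponent $3$ or does not, the two cases together give the equivalence. Conversely, each group listed in (i)--(iii) falls into exactly one case, where it was shown to have $\mathcal A(G)$ minimal.

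All of the real content sits in the two quoted results. For this statement the only points to check are that the case split is exhaustive and that exponent $3$ forces $G\cong\mathbb Z_3^r$; both are immediate from the structure theorem for finite abelian groups. If one wanted the argument fully self-contained, the main obstacle would be the case $r\in\{3,4\}$ of the $\mathbb Z_3^r$ theorem. There the proof merely asserts a ``straightforward calculation'': for each $W\in\mathcal A(\mathbb Z_3^r)$ one needs a sequence of length $\mathrm D(G)-1=2r$ whose only minimal zero-sum subsequence is $W$. I would treat this by reducing modulo $\mathrm{GL}_r(\mathbb F_3)$-equivalence, which preserves the property and shrinks the list of $W$ to finitely many orbit representatives, and then run a finite (computer) search over these representatives.
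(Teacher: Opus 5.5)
Your argument is correct and is exactly the paper's: split on whether $\exp(G)=3$. Quote Theorem A when $\exp(G)\neq 3$, and when $\exp(G)=3$ (so $G\cong\mathbb Z_3^r$) use the preceding $\mathbb Z_3^r$ theorem.

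One correction to your side remark on $r\in\{3,4\}$. Since $d_{\mathcal A(G)\setminus\{W\}}(G)\ge \mathrm D(G)$ holds automatically, the witness needed for each $W\in\mathcal A(G)$ is a sequence of length $\mathrm D(G)=2r+1$, not $2r$, whose only minimal zero-sum subsequence is $W$. A length-$2r$ witness proves nothing new, and it cannot exist at all when $|W|=2r+1$.
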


\section{Finiteness of Weighted Davenport constant}

We begin this section with some terminologies. Let $\{F_i:i\in I\}$ be a family of abelian groups, and let $G$ be an
abelian group. Let
$\psi_i:F_i\longrightarrow G \qquad (i\in I)$
be homomorphisms. By the universal property of the direct sum, there
exists a unique homomorphism
$
  \psi:\bigoplus\limits_{i\in I}F_i\longrightarrow G
$
such that
\begin{equation}\label{equation universal map}
  \psi\circ\iota_i=\psi_i
  \qquad\text{for every }i\in I,
\end{equation}
where
$
  \iota_i:F_i\longrightarrow\bigoplus\limits_{t\in I}F_t
$
is the canonical injection. We call $\psi$ the \emph{universal
homomorphism induced by the family $\{\psi_i:i\in I\}$}, denoted
$$\psi=\operatorname{ind}[\{\psi_i:i\in I\}].$$
Now let $n\geq 1$ and let $F_1=\cdots=F_n=F$. We identify
$\bigoplus\limits_{i=1}^n F_i$ with $F^n$. For any family of homomorphism
$\{\psi_i\}_{i\in [1,n]}$, also denoted as $(\psi_1,\ldots,\psi_n)$, we can write
$\operatorname{ind}[\{\psi_i:i\in [1,n]\}]$ as $\operatorname{ind}[(\psi_1,\ldots,\psi_n)].$
Therefore, for any $\boldsymbol{\eta}=(\psi_1,\ldots,\psi_n)\in \operatorname{Hom}(F,G)^n$ and any $(x_1,\ldots,x_n)\in F^n$,
we have
\begin{equation}\label{eq:induced-explicit}
\operatorname{ind}[\boldsymbol{\eta}]\left((x_1,\ldots,x_n)\right)=\operatorname{ind}[\boldsymbol{\eta}]\left(\sum\limits_{i=1}^n \iota_i(x_i)\right)=\sum\limits_{i=1}^n  \operatorname{ind}[\boldsymbol{\eta}]\left(\iota_i(x_i)\right)=\sum\limits_{i=1}^n \psi_i(x_i).
\end{equation}
Note that if $(\psi_1,\ldots,\psi_n)\neq (\mathbf 0,\ldots,\mathbf 0)$, then the induced universal
homomorphism $\operatorname{ind}[(\psi_1,\ldots,\psi_n)]$ is a nonzero homomorphism of $F^n$ to $G$.

Then we shall prove our main theorem for this section.

\begin{theorem}\label{thm:kernel-cover}
Let $F$ and $G$ be abelian groups, let
$\varnothing\neq\Psi\subseteq\operatorname{Hom}(F,G)\setminus\{\mathbf 0\},$
and let $n\ge 1$. Then the following two statements are equivalent.
\begin{enumerate}
  \item[\rm (i)] ${\rm D}_{\Psi}(G)\le n$.

  \item[\rm (ii)] The group $F^n$ is covered by the kernels of the
  nonzero universal homomorphisms induced by the tuples in
  $(\Psi\cup\{\mathbf 0\})^n$; more precisely,
  $$F^n=\bigcup_{\boldsymbol{\eta}\in
      (\Psi\cup\{\mathbf 0\})^n\setminus\{(\mathbf 0,\ldots,\mathbf 0)\}}
    \ker\bigl(\operatorname{ind}[\boldsymbol{\eta}]\bigr).$$
\end{enumerate}

Consequently,
\begin{equation}\label{eq:D-minimum}
 {\rm D}_{\Psi}(G)
 =
 \min\left\{
 n\in\mathbb N:
 F^n
 =
 \bigcup_{\boldsymbol{\eta}\in
   (\Psi\cup\{\mathbf 0\})^n\setminus
   \{(\mathbf 0,\ldots,\mathbf 0)\}}
 \ker\bigl(\operatorname{ind}[\boldsymbol{\eta}]\bigr)
 \right\},
\end{equation}
where the minimum of the empty set is understood to be $\infty$.
\end{theorem}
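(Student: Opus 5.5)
The plan is to translate both conditions into a statement about sequences of length exactly $n$, identifying a sequence $T=x_1\cdot\ldots\cdot x_n$ over $F$ with the point $(x_1,\ldots,x_n)\in F^n$. Order does not matter, since $\Psi$-zero-sum freeness is a property of the multiset. The key observation is the following dictionary. A nonempty $\Psi$-zero-sum subsequence of $T$ consists of a nonempty index set $S\subseteq[1,n]$ and weights $\psi_i\in\Psi$ for $i\in S$ with $\sum_{i\in S}\psi_i(x_i)=0_G$. Setting $\psi_i=\mathbf 0$ for $i\notin S$ gives a tuple $\boldsymbol\eta\in(\Psi\cup\{\mathbf 0\})^n$ whose support is exactly $S$. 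By \eqref{eq:induced-explicit}, this says precisely that $(x_1,\ldots,x_n)\in\ker(\operatorname{ind}[\boldsymbol\eta])$.

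Here the hypothesis $\mathbf 0\notin\Psi$ is essential. It guarantees that the support of $\boldsymbol\eta$ is nonempty exactly when $\boldsymbol\eta\neq(\mathbf 0,\ldots,\mathbf 0)$. It also lets us recover $S$ from $\boldsymbol\eta$ unambiguously as $\{i:\eta_i\neq\mathbf 0\}$. Thus, for each fixed $T$, there is a bijection between nonempty $\Psi$-weighted zero-sum relations on $T$ (index sets together with weights) and nonzero tuples $\boldsymbol\eta$ with $(x_1,\ldots,x_n)\in\ker\operatorname{ind}[\boldsymbol\eta]$.

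\textbf{(i) implies (ii).} Suppose ${\rm D}_\Psi(G)\le n$. Then every sequence of length $n$ is not $\Psi$-zero-sum free. This needs a small monotonicity remark: if every sequence of length $\ell$ has a $\Psi$-zero-sum subsequence, then so does every longer sequence, since it contains a subsequence of length $\ell$. So every point of $F^n$ admits a relation, and by the dictionary it lies in some $\ker\operatorname{ind}[\boldsymbol\eta]$ with $\boldsymbol\eta$ nonzero.

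\textbf{(ii) implies (i).} Conversely, under (ii) every sequence of length $n$ has a nonempty $\Psi$-zero-sum subsequence. Hence ${\rm D}_\Psi(G)\le n$ by the definition of ${\rm D}_\Psi(G)$ as a least integer.

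\textbf{The formula \eqref{eq:D-minimum}.} Consider the set of $n\in\mathbb N$ for which (ii) holds. By the equivalence this set equals $\{n: {\rm D}_\Psi(G)\le n\}$. Its minimum is therefore ${\rm D}_\Psi(G)$ when ${\rm D}_\Psi(G)$ is finite, and the set is empty (minimum $\infty$) otherwise.

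The only delicate points are the monotonicity remark in (i)$\Rightarrow$(ii) and the bookkeeping role of $\mathbf 0\notin\Psi$. I expect no genuine obstacle beyond stating these carefully.
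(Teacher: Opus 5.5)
Your proposal is correct and follows the paper's proof essentially verbatim. In one direction you pad a $\Psi$-zero-sum relation on $T=x_1\cdot\ldots\cdot x_n$ with zero weights to obtain a nonzero tuple $\boldsymbol\eta$ with $(x_1,\ldots,x_n)\in\ker(\operatorname{ind}[\boldsymbol\eta])$; in the other you read off the support $J=\{j:\psi_j\neq\mathbf 0\}$ as the zero-sum subsequence. Your explicit monotonicity remark and your observation that $\mathbf 0\notin\Psi$ is what makes a nonempty support yield a nonzero tuple are both steps the paper uses only implicitly, so stating them is a small improvement in rigor.
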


\begin{proof}
Assume first that ${\rm D}_{\Psi}(G)\le n$. We take an arbitrary element
$(x_1,\ldots,x_n)\in F^n.$
Consider the sequence
$T=x_1\cdot\ldots\cdot x_n$
over $F$. By the definition of ${\rm D}_{\Psi}(G)$, the sequence $T$
has a nonempty $\Psi$-weighted zero-sum subsequence. Hence there exist
a {\bf nonempty} set $I\subseteq[1,n]$ and homomorphisms
$\psi_i\in\Psi \ (i\in I)$
such that
$\sum_{i\in I}\psi_i(x_i)=0_G.$ For each $j\in[1,n]\setminus I$, we set $\psi_j=\mathbf 0$. Denote
$\boldsymbol{\eta}=(\psi_1,\ldots,\psi_n).$
Since $I\neq\varnothing$, the tuple $\boldsymbol{\eta}$ is not the
zero tuple, i.e., $\boldsymbol{\eta}\in
   (\Psi\cup\{\mathbf 0\})^n\setminus
   \{(\mathbf 0,\ldots,\mathbf 0)\}$. By \eqref{eq:induced-explicit}, we derive that
$\operatorname{ind}[\boldsymbol{\eta}]\left((x_1,\ldots,x_n)\right)=\sum_{i=1}^n\psi_i(x_i)=\sum_{i\in I}\psi_i(x_i)=0_G.$ That is,
$(x_1,\ldots,x_n)\in
  \ker\bigl(\operatorname{ind}[\boldsymbol{\eta}]\bigr).$
By the arbitrariness of choosing $(x_1,\ldots,x_n)$ from $F^n$, Statement \emph{(ii)} holds. This proves that Statement (i) implies Statement (ii).

Now we show that Statement (ii) implies Statement (i). Suppose that (ii) holds.  Let
$T=x_1\cdot\ldots\cdot x_n$
be an arbitrary sequence over $F$ of length $n$.  Since $(x_1,\ldots,x_n)\in F^n$, it follows that there
exists
$\boldsymbol{\eta}=(\psi_1,\ldots,\psi_n)
  \in
  (\Psi\cup\{\mathbf 0\})^n
  \setminus\{(\mathbf 0,\ldots,\mathbf 0)\}$
such that
$(x_1,\ldots,x_n)
  \in
  \ker\bigl(\operatorname{ind}[\boldsymbol{\eta}]\bigr).$
Set
$J=\{j\in[1,n]:\psi_j\neq\mathbf 0\}.$
Since $\boldsymbol{\eta}$ is not the zero tuple, we have $J\neq
\varnothing$. Note that $\psi_j\in\Psi$ for each $j\in J$, and that
$0_G=\operatorname{ind}[\boldsymbol{\eta}]
      \left((x_1,\ldots,x_n)\right)=
      \sum_{i=1}^n\psi_i(x_i) =
      \sum_{j\in J}\psi_j(x_j).$
Thus
$\prod\limits_{j\in J}x_j$
is a nonempty $\Psi$-weighted zero-sum subsequence of $T$. By the arbitrariness of choosing $T\in \mathcal{F}(F)$ with $|T|=n$, it follows that
${\rm D}_{\Psi}(G)\le n.$

The formula \eqref{eq:D-minimum} follows immediately from the
equivalence of (i) and (ii).
\end{proof}

Now we show that Theorem \ref{thm:kernel-cover} implies that Theorem B.
In fact, we just need to show  that if there exists some $n>0$ such that $F^n$ is covered by the kernels of all nonzero induced universal homomorphisms, then we have a finite cover of $F$ given as Theorem B (ii). Precisely, we show the following.

\begin{prop}\label{cor:finite-coset-cover}
Let $F$ and $G$ be abelian groups, and let $\Psi$ be a nonempty
{\bf finite} subset of $\operatorname{Hom}(F,G)$. Suppose that there exists some $n>0$ such that $F^n=\bigcup_{\boldsymbol{\eta}\in
      (\Psi\cup\{\mathbf 0\})^n\setminus\{(\mathbf 0,\ldots,\mathbf 0)\}}
    \ker\bigl(\operatorname{ind}[\boldsymbol{\eta}]\bigr).$  then $F$ has a finite cover by cosets of $\ker(\tau)$,
where $\tau\in\Psi$.
\end{prop}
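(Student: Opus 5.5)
The plan is a direct induction on $n$ that exploits the product structure of $F^n$ and the explicit formula \eqref{eq:induced-explicit}. Since $\Psi$ is finite, the index set $(\Psi\cup\{\mathbf 0\})^n\setminus\{(\mathbf 0,\ldots,\mathbf 0)\}$ is finite, of size at most $(|\Psi|+1)^n-1$. The hypothesis therefore says that $F^n$ is a finite union of the subgroups $K_{\boldsymbol\eta}:=\ker(\operatorname{ind}[\boldsymbol\eta])$. (If $\mathbf 0\in\Psi$ the conclusion is trivial, because $\ker\mathbf 0=F$, so I assume $\mathbf 0\notin\Psi$.) The goal is to find a single coordinate fiber $F\times\{\text{fixed}\}$ on which this cover becomes a cover of $F$ by cosets of kernels of elements of $\Psi$.

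For the base case $n=1$, the hypothesis reads $F=\bigcup_{\psi\in\Psi}\ker\psi$. This is already a finite cover of $F$ by cosets (with representative $0$) of the kernels $\ker\psi$, $\psi\in\Psi$.

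For the inductive step, assume the statement for $n-1$, and split the tuples $\boldsymbol\eta=(\psi_1,\ldots,\psi_n)$ according to whether $\psi_n=\mathbf 0$.
\begin{enumerate}
\item[(a)] If $\psi_n=\mathbf 0$, then $\boldsymbol\eta'=(\psi_1,\ldots,\psi_{n-1})$ is a nonzero tuple and, by \eqref{eq:induced-explicit}, $K_{\boldsymbol\eta}=K_{\boldsymbol\eta'}\times F$.
\item[(b)] If $\psi_n\neq\mathbf 0$, then $\psi_n\in\Psi$. For a fixed $\mathbf y=(x_1,\ldots,x_{n-1})$, the set $\{x_n\in F:(\mathbf y,x_n)\in K_{\boldsymbol\eta}\}$ consists of the solutions of $\psi_n(x_n)=-\sum_{i<n}\psi_i(x_i)$. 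It is therefore either empty or a single coset $a_{\boldsymbol\eta,\mathbf y}+\ker\psi_n$.
\end{enumerate}

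Now there are two cases. If $F^{n-1}$ is covered by the $K_{\boldsymbol\eta'}$ with $\boldsymbol\eta'$ nonzero in $(\Psi\cup\{\mathbf 0\})^{n-1}$, the induction hypothesis gives the desired finite coset cover of $F$. Otherwise there is some $\mathbf y\in F^{n-1}$ lying in no such $K_{\boldsymbol\eta'}$. Then no kernel of type (a) meets the fiber $\{\mathbf y\}\times F$. Hence the whole fiber is covered by the kernels of type (b), and projecting to the last coordinate gives
$$F=\bigcup_{\boldsymbol\eta:\ \psi_n\neq\mathbf 0,\ \text{fiber nonempty}}\bigl(a_{\boldsymbol\eta,\mathbf y}+\ker\psi_n\bigr).$$
This is a finite union, because there are only finitely many tuples $\boldsymbol\eta$, and every term is a coset of $\ker\tau$ for some $\tau=\psi_n\in\Psi$. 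That is exactly the cover required in Theorem B (ii).

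There is no serious obstacle. The one point that needs care is the dichotomy: a point $\mathbf y$ escaping the $(n-1)$-dimensional cover forces the entire fiber over it to be covered by type-(b) kernels, and each such kernel meets the fiber in either nothing or one coset. Finiteness of $\Psi$ is used only to keep the number of cosets finite. This is precisely where the argument breaks for infinite $\Psi$, which motivates the kernel-cover compactness discussed later in the paper.
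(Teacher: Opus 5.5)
Your proof is correct and is essentially the paper's argument. The paper takes $d={\rm D}_{\Psi}(G)$, finite by Theorem~\ref{thm:kernel-cover}, and picks a $\Psi$-zero-sum free sequence $b_1\cdots b_{d-1}$, which is exactly a point of $F^{d-1}$ escaping the level-$(d-1)$ kernel cover; it then examines the fiber over that point in $F^d$, where the last weight must be nonzero, just as in your case (b). Your induction reaches the same minimal covering level directly from the cover instead of going through the Davenport constant, and it also handles the possibly empty preimages $\tau^{-1}(-s)$ explicitly, which the paper passes over.
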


\begin{proof}
If $\mathbf 0\in\Psi$, then $F=\ker(\mathbf 0)$, and there is nothing
to prove. Thus assume that $\mathbf 0\notin\Psi$. Put
$d={\rm D}_{\Psi}(G)$.
Choose a $\Psi$-zero-sum free sequence
$T=b_1\cdots b_{d-1}\in \mathcal F(F)$ of length $d-1$.  Define the finite set
$$S=
\left\{
\sum_{i=1}^{d-1}\psi_i(b_i):
\psi_i\in\Psi\cup\{\mathbf 0\}
\right\}.$$
Take an arbitrary element $x$ of $F$. Since $(b_1,\ldots,b_{d-1},x)\in F^d$, it follows that
$(b_1,\ldots,b_{d-1},x)\in \ker(\operatorname{ind}[(\tau_1,\ldots,\tau_d)])$
for some
$(\tau_1,\ldots,\tau_d)
\in(\Psi\cup\{\mathbf 0\})^d
\setminus\{(\mathbf 0,\ldots,\mathbf 0)\}.$ By \eqref{eq:induced-explicit}, we have
$\sum_{i=1}^{d-1}\tau_i(b_i)+\tau_d(x)=0.$
Note that $\tau_d\neq\mathbf 0$. Otherwise, the nonzero maps among
$\tau_1,\ldots,\tau_{d-1}$ would give a nonempty
$\Psi$-weighted zero-sum subsequence of $T$, contradicting the choice
of $T$. Thus $\tau_d\in\Psi$, and $\tau_d(x)\in -S.$ By the arbitrariness of $x\in F$, we have $F=\bigcup\limits_{\tau\in\Psi}\tau^{-1}(-S)=\bigcup\limits_{\tau\in\Psi}\bigcup\limits_{s\in S}\tau^{-1}(-s)$.
Since both $\Psi$ and $S$ are finite, this is a finite union.
Moreover, every set $\tau^{-1}(-s)$ is a coset of
$\ker\tau$.  Then $F$ has a finite cover by cosets of kernels $\ker(\tau)$ with
$\tau\in\Psi$.
\end{proof}

The following example shows that, for infinite $\Psi$, the kernel
cover given by Theorem \ref{thm:kernel-cover} need not admit a finite subcover, even when
${\rm D}_{\Psi}(G)$ is finite.

\begin{exam}\label{ex:no-finite-kernel-subcover}
Let $F=G=\mathbb Z$
and let $\Psi=\operatorname{End}(\mathbb Z)\setminus\{0\}$. Then $\Psi=\{\psi_k:k\in\mathbb Z\setminus\{0\}\}$
where $\psi_k:z\mapsto kz$ for any $z\in \mathbb{Z}$. Then
${\rm D}_{\Psi}(\mathbb Z)=2.$
However, for every $n\ge 1$, the group $\mathbb Z^n$ cannot be
covered by finitely many kernels of {\bf nonzero} universal homomorphisms
induced by tuples in
$(\Psi\cup\{\mathbf 0\})^n=(\operatorname{End}(\mathbb Z))^n.$
\end{exam}

\begin{proof}
It is easy to show that ${\rm D}_{\Psi}(\mathbb Z)=2,$  for which one can also refer to the argument of Example 5.8 in \cite{WangcommInAlgebra}.

It remains to prove the assertion about finite kernel covers. Suppose to the contrary that there exists some integer $n>0$, and there exists $\boldsymbol{\eta}_1,\ldots,\boldsymbol{\eta}_s\in (\operatorname{End}(\mathbb Z))^n\setminus \{\boldsymbol{0}\}$ with $s>0$ such that $\mathbb{Z}^n=\bigcup\limits_{i=1}^s \ker(\operatorname{ind}[\boldsymbol{\eta}_i])$.  Note that $\boldsymbol{\eta}_i=(\psi_{k^{(i)}_1}, \ldots,\psi_{k^{(i)}_n})$ where   $(k^{(i)}_1,\ldots,k^{(i)}_n)$ is a nonzero vector of $\mathbb{Z}^n$ for each $i\in [1,s]$. By \eqref{eq:induced-explicit}, we see that the induced universal homomorphism $\operatorname{ind}[(\psi_{k_1},\ldots,\psi_{k_n})]:\mathbb Z^n\longrightarrow \mathbb Z$
is given by
$$\operatorname{ind}[(\psi_{k_1^{(i)}},\ldots,\psi_{k_n^{(i)}})]:  \ \ \ (z_1,\ldots,z_n)\mapsto k_1^{(i)}z_1+\cdots+k_n^{(i)}z_n.$$
Define the polynomial
$$P^{(i)}(x)
=
k^{(i)}_1
+
k^{(i)}_2x
+
\cdots
+
k^{(i)}_n x^{n-1}
\in\mathbb Z[x],$$ where $i\in [1,s]$.
 For each $i\in [1,s]$, we see that $P^{(i)}(x)$ is a nonzero polynomial, and thus,
it has only finitely many integer roots. Then we can find some $u\in \mathbb{Z}$ such that $u$ is not a root of $P^{(i)}(t)$ for all $i\in [1,s]$, i.e., $k^{(i)}_1
+
k^{(i)}_2u
+
\cdots
+
k^{(i)}_n u^{n-1}\neq 0$. Thus, $(1,u,\ldots,u^{n-1})\notin \bigcup\limits_{i=1}^s \ker(\operatorname{ind}[\boldsymbol{\eta}_i])$, a contradiction.
\end{proof}

Compared with the finite weights set, one question is basic: Suppose $D_{\Psi}(G)$ is finite. In Theorem \ref{thm:kernel-cover} (ii), when do we have a cover of $F^n$ by finitely many kernels?  In the following theorem, we shall show that the existence of finite cover by kernels of induced universal homomorphism is equivalent to the finite reduction property of the weights set.

\begin{prop}
Let $F$ and $G$ be abelian groups, let
$\varnothing\neq\Psi\subseteq \operatorname{Hom}(F,G)$, and let $n\ge 1$.
Then the following two conclusions are equivalent:

(i) The canonical kernel cover
$$F^n=
\bigcup_{\boldsymbol{\eta}\in
(\Psi\cup\{\mathbf 0\})^n
\setminus\{(\mathbf 0,\ldots,\mathbf 0)\}}
\ker(\operatorname{ind}[\boldsymbol{\eta}])$$
has a finite subcover;

(ii) There exists a finite subset $\Psi_0\subseteq \Psi$ such that
$$D_{\Psi_0}(G)\le n.$$

In particular, if $n={\rm D}_{\Psi}(G)<\infty$, then the above
conditions are equivalent to the existence of a finite subset
$\Psi_0\subseteq\Psi$ such that ${\rm D}_{\Psi_0}(G)={\rm D}_{\Psi}(G).$
\end{prop}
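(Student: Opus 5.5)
The plan is to reduce both directions to Theorem \ref{thm:kernel-cover} applied to a suitable finite weight set. One subtlety: Theorem \ref{thm:kernel-cover} assumes $\mathbf 0\notin\Psi$. I will handle this by noting that if $\mathbf 0\in\Psi$, then $\Psi_0=\{\mathbf 0\}$ gives ${\rm D}_{\Psi_0}(G)=1\le n$. Moreover, any single nonzero tuple $(\mathbf 0,\ldots,\mathbf 0,\mathbf 0_{\text{as element of }\Psi},\ldots)$ in that case is not distinguishable from zero. So I would argue directly from the definitions, mirroring the proof of Theorem \ref{thm:kernel-cover}, rather than invoking it verbatim. The direct argument is insensitive to whether $\mathbf 0\in\Psi$, provided we read ``$\psi_j\neq\mathbf 0$'' as ``the coordinate comes from $\Psi$''. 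Concretely, I will work with the equivalence already established: $(x_1,\ldots,x_n)\in\ker(\operatorname{ind}[\boldsymbol\eta])$ for a nonzero tuple $\boldsymbol\eta\in(\Psi'\cup\{\mathbf 0\})^n$ exactly when $x_1\cdots x_n$ has a nonempty $\Psi'$-zero-sum subsequence.

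(i)$\Rightarrow$(ii). Suppose $F^n=\bigcup_{k=1}^s\ker(\operatorname{ind}[\boldsymbol\eta_k])$ with each $\boldsymbol\eta_k=(\psi^{(k)}_1,\ldots,\psi^{(k)}_n)$ a nonzero tuple in $(\Psi\cup\{\mathbf 0\})^n$. Define $\Psi_0$ as the set of all coordinates $\psi^{(k)}_j$ lying in $\Psi$ and nonzero. If this set is empty because $\mathbf 0\in\Psi$ is the only coordinate used, take $\Psi_0=\{\mathbf 0\}$ instead. In either case $\Psi_0$ is finite and nonempty, since each tuple has a nonzero entry. Each $\boldsymbol\eta_k$ lies in $(\Psi_0\cup\{\mathbf 0\})^n$ minus the zero tuple, so $F^n$ is covered by the canonical kernel cover for $\Psi_0$. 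The implication (ii)$\Rightarrow$(i) of Theorem \ref{thm:kernel-cover} for $\Psi_0$ then gives ${\rm D}_{\Psi_0}(G)\le n$.

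(ii)$\Rightarrow$(i). Given a finite $\Psi_0\subseteq\Psi$ with ${\rm D}_{\Psi_0}(G)\le n$, the implication (i)$\Rightarrow$(ii) of Theorem \ref{thm:kernel-cover} (or its direct proof, if $\mathbf 0\in\Psi_0$) covers $F^n$ by kernels indexed by $(\Psi_0\cup\{\mathbf 0\})^n\setminus\{(\mathbf 0,\ldots,\mathbf 0)\}$. This index set is finite, of size at most $(|\Psi_0|+1)^n$, and is contained in the index set for $\Psi$, so it is a finite subcover.

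For the final assertion, let $n={\rm D}_\Psi(G)$. If $\Psi_0\subseteq\Psi$, every $\Psi_0$-zero-sum is a $\Psi$-zero-sum, so ${\rm D}_{\Psi_0}(G)\ge{\rm D}_\Psi(G)$. Combined with ${\rm D}_{\Psi_0}(G)\le n$ from (ii), this gives equality. The converse is immediate. The only real obstacle is the bookkeeping around $\mathbf 0\in\Psi$ and keeping $\Psi_0$ nonempty; the rest is a direct transfer through Theorem \ref{thm:kernel-cover}.
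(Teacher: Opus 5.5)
Your main line is the paper's proof. For (i)$\Rightarrow$(ii) you let $\Psi_0$ be the set of nonzero coordinates of the finitely many tuples in the subcover and apply the direction (ii)$\Rightarrow$(i) of Theorem~\ref{thm:kernel-cover} to $\Psi_0$. This is legitimate, since $\Psi_0$ is nonempty and consists of nonzero maps; your fallback $\Psi_0=\{\mathbf 0\}$ never occurs. For (ii)$\Rightarrow$(i) you use that $(\Psi_0\cup\{\mathbf 0\})^n\setminus\{(\mathbf 0,\ldots,\mathbf 0)\}$ is finite. Your monotonicity argument for the final assertion, which the paper leaves implicit, is also fine.

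What fails is your claim that the direct proof of Theorem~\ref{thm:kernel-cover} still works when $\mathbf 0\in\Psi_0$. A tuple in $(\Psi_0\cup\{\mathbf 0\})^n$ records only homomorphisms, not where they came from, so a zero coordinate cannot be ``read'' as coming from $\Psi_0$. Concretely, the step ``$I\neq\varnothing$, hence $\boldsymbol{\eta}$ is not the zero tuple'' breaks as soon as every weight used on $I$ is $\mathbf 0$. The resulting zero tuple is exactly the one excluded from the cover.

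This cannot be patched, because the statement is false when $\mathbf 0\in\Psi$. For $\Psi=\{\mathbf 0\}$ the canonical family is empty, so (i) fails. Yet (ii) holds with $\Psi_0=\{\mathbf 0\}$, since ${\rm D}_{\{\mathbf 0\}}(G)=1$, as you yourself note. That observation makes (ii) automatic whenever $\mathbf 0\in\Psi$, and it should have been the warning sign.

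A less degenerate example is $F=G=\mathbb Z$ with $\Psi=\{\mathbf 0,\mathrm{id}\}$. For every $n$, $\operatorname{ind}[\boldsymbol{\eta}](1,\ldots,1)$ equals the number of coordinates of $\boldsymbol{\eta}$ equal to $\mathrm{id}$. This is positive for every $\boldsymbol{\eta}\neq(\mathbf 0,\ldots,\mathbf 0)$, so (i) fails at every level $n$ although (ii) holds at every level.

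The right repair is to assume $\mathbf 0\notin\Psi$, as in Theorem~\ref{thm:kernel-cover}. The paper's proof tacitly does this when it applies that theorem to $\Psi_0$. Under that hypothesis your argument coincides with the paper's, and the bookkeeping about $\mathbf 0\in\Psi$ should simply be dropped.
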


\begin{proof}
Suppose first that the canonical kernel cover has a finite subcover.
Then there exist finitely many tuples
$\boldsymbol{\eta}^{(1)},\ldots,\boldsymbol{\eta}^{(m)}
\in
(\Psi\cup\{\mathbf 0\})^n
\setminus\{(\mathbf 0,\ldots,\mathbf 0)\}$
such that
$F^n=
\bigcup_{j=1}^{m}
\ker(\operatorname{ind}[\boldsymbol{\eta}^{(j)}]).$
Let $\Psi_0$ be the finite set of all nonzero homomorphisms appearing
as coordinates of the tuples
$\boldsymbol{\eta}^{(1)},\ldots,\boldsymbol{\eta}^{(m)}$.
Then $F^n=
\bigcup_{\boldsymbol{\eta}\in
(\Psi_0\cup\{\mathbf 0\})^n
\setminus\{(\mathbf 0,\ldots,\mathbf 0)\}}
\ker(\operatorname{ind}[\boldsymbol{\eta}]).$
By Theorem \ref{thm:kernel-cover}, we derive that
$D_{\Psi_0}(G)\leq n.$

Conversely, suppose that $\Psi_0\subseteq\Psi$ is finite and
$D_{\Psi_0}(G)\le n$. Again by  Theorem \ref{thm:kernel-cover},
$F^n=
\bigcup_{\boldsymbol{\eta}\in
(\Psi_0\cup\{\mathbf 0\})^n
\setminus\{(\mathbf 0,\ldots,\mathbf 0)\}}
\ker(\operatorname{ind}[\boldsymbol{\eta}]).$
Since $\Psi_0$ is finite, this is a finite subcover of the canonical
$\Psi$-kernel cover.
\end{proof}

From Theorem~\ref{thm:kernel-cover}, if $D_{\Psi}(G)\le n$, then the
canonical family $\mathcal K_n(\Psi)$ covers $F^n$.  If this cover
has a finite subcover, then only finitely many homomorphisms from
$\Psi$ are involved, and consequently $D_{\Psi}(G)$ is already
attained by a finite subset $\Psi_0$ of $\Psi$.
This motivates the following
kernel-cover compactness property.

\begin{definition}
Let $F$ and $G$ be abelian groups, let
$\varnothing\neq\Psi\subseteq\operatorname{Hom}(F,G)$, and let $n\ge1$. Put
$$
\mathcal K_n(\Psi)
=
\left\{
\ker(\operatorname{ind}[\boldsymbol{\eta}]):
\boldsymbol{\eta}\in
(\Psi\cup\{\mathbf 0\})^n
\setminus\{(\mathbf 0,\ldots,\mathbf 0)\}
\right\}.
$$
We say that $F^n$ is \emph{$\Psi$-kernel-cover compact} (at level $n$)
if every cover of $F^n$ by members of $\mathcal K_n(\Psi)$ has a
finite subcover.

Furthermore, we say that $F^n$ is {\bf uniformly} \emph{$\Psi$-kernel-cover compact} provided that for any $X\subseteq F^n$, every cover of $X$ by members of $\mathcal K_n(\Psi)$ has a
finite subcover.
\end{definition}

\begin{remark} By definition, uniform $\Psi$-kernel-cover compactness implies
$\Psi$-kernel-cover compactness.
\end{remark}

\begin{lemma}\label{lemma:sometau} Let $F$ and $G$ be abelian groups, let
$\Psi\subseteq\operatorname{Hom}(F,G)$. If there exists $\tau\in \Psi$ such that $F/\ker(\tau)$ is finite, then $D_{\Psi}(G)<\infty$.
\end{lemma}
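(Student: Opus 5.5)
Since $F/\ker(\tau)\cong{\rm Im}\,\tau$, the hypothesis means $|{\rm Im}\,\tau|=m<\infty$. I would reduce to the single weight $\{\tau\}$ by Remark-style monotonicity and then use a pigeonhole argument on partial sums inside the finite group ${\rm Im}\,\tau$. Monotonicity is immediate from the definition: if $\{\tau\}\subseteq\Psi$, then every $\{\tau\}$-weighted zero-sum subsequence is also a $\Psi$-weighted zero-sum subsequence. Hence ${\rm D}_{\Psi}(G)\le {\rm D}_{\{\tau\}}(G)$, and it suffices to bound the latter.

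Take any sequence $T=x_1\cdot\ldots\cdot x_m$ over $F$ of length $m$ and form the partial sums
$$s_j=\sum_{i=1}^{j}\tau(x_i)\in{\rm Im}\,\tau, \qquad j\in[0,m],$$
where $s_0=0_G$. These are $m+1$ elements of a set of size $m$, so $s_j=s_k$ for some $0\le j<k\le m$. Then $\sum_{i=j+1}^{k}\tau(x_i)=0_G$, and $x_{j+1}\cdot\ldots\cdot x_k$ is a nonempty $\{\tau\}$-weighted, hence $\Psi$-weighted, zero-sum subsequence of $T$. This gives ${\rm D}_{\Psi}(G)\le |F/\ker\tau|<\infty$.

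Alternatively, one could invoke Theorem B with the finite weight set $\{\tau\}$: condition (iii) holds, so ${\rm D}_{\{\tau\}}(G)<\infty$, and monotonicity finishes the proof. There is no real obstacle here. The only point to state carefully is the monotonicity ${\rm D}_{\Psi}\le{\rm D}_{\Psi'}$ for $\Psi'\subseteq\Psi$, since $\Psi$ may be infinite and Theorem B does not apply to it directly. If $\tau=\mathbf 0$ is allowed, the argument still works: $m=1$, and any single term is already a weighted zero-sum.
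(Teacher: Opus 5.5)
Your proof is correct and is essentially the paper's argument. The paper notes ${\rm D}_{\Psi}(G)\le {\rm D}(F/\ker(\tau))\le |F/\ker(\tau)|$, and your monotonicity step followed by pigeonhole on the partial sums $\sum_{i\le j}\tau(x_i)$ in ${\rm Im}\,\tau\cong F/\ker(\tau)$ is just those two inequalities proved inline.
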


\begin{proof} By definition, it is not hard to show that $D_{\Psi}(G)\leq D(F/\ker(\tau))\leq |F/\ker(\tau)|$. One can also refer Lemma 5.5 in \cite{WangcommInAlgebra}.
\end{proof}

Next we shall give a sufficient condition such that $F^n$ is uniformly $\Psi$-kernel-cover compact which is stated as the following proposition.

\begin{prop}
\label{prop:finite-effective-quotient}
Let $F$ and $G$ be abelian groups, let
$\varnothing\neq\Psi\subseteq\operatorname{Hom}(F,G)$, and put $N_{\Psi}
=
\bigcap\limits_{\psi\in\Psi}\ker(\psi).$
If $F/N_{\Psi}$ is finite, then $D_{\Psi}(G)$ is finite and $F^n$ is uniformly $\Psi$-kernel-cover compact for every $n\geq D_{\Psi}(G)$.
\end{prop}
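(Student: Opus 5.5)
The plan is to show that, under the hypothesis $|F/N_{\Psi}|<\infty$, the family $\mathcal K_n(\Psi)$ contains only \emph{finitely many distinct sets}. Uniform kernel-cover compactness then follows trivially: any cover of any $X\subseteq F^n$ by members of $\mathcal K_n(\Psi)$ uses at most finitely many distinct sets, so it has a finite subcover.

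\emph{Step 1: finiteness of ${\rm D}_{\Psi}(G)$.} Pick any $\tau\in\Psi$, which is possible since $\Psi\neq\varnothing$. Because $N_{\Psi}\subseteq\ker(\tau)$, the group $F/\ker(\tau)$ is a quotient of $F/N_{\Psi}$, hence finite. Lemma \ref{lemma:sometau} then gives ${\rm D}_{\Psi}(G)<\infty$.

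\emph{Step 2: every kernel in $\mathcal K_n(\Psi)$ contains $N_{\Psi}^n$.} Let $\boldsymbol{\eta}=(\psi_1,\ldots,\psi_n)\in(\Psi\cup\{\mathbf 0\})^n$. Each $\psi_i$ vanishes on $N_{\Psi}$; this holds trivially when $\psi_i=\mathbf 0$. By \eqref{eq:induced-explicit}, $\operatorname{ind}[\boldsymbol{\eta}]$ therefore vanishes on $N_{\Psi}^n$. So $\ker(\operatorname{ind}[\boldsymbol{\eta}])$ is a subgroup of $F^n$ containing $N_{\Psi}^n$.

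\emph{Step 3: there are only finitely many such subgroups.} By the correspondence theorem, subgroups of $F^n$ containing $N_{\Psi}^n$ are in bijection with subgroups of $F^n/N_{\Psi}^n\cong (F/N_{\Psi})^n$. This group is finite, of order $|F/N_{\Psi}|^n$, so it has only finitely many subgroups. Hence $\mathcal K_n(\Psi)$, viewed as a set of subsets of $F^n$, is finite.

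\emph{Step 4: conclusion.} Let $X\subseteq F^n$ and let $\mathcal C\subseteq\mathcal K_n(\Psi)$ cover $X$. Then $\mathcal C$ is itself a finite collection of sets, so it is already a finite subcover. This argument works for every $n\ge 1$, and in particular for all $n\ge {\rm D}_{\Psi}(G)$. For those $n$, Theorem \ref{thm:kernel-cover} additionally guarantees that the canonical cover of $F^n$ exists, so the compactness statement is not vacuous.

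I do not expect a genuine obstacle. The only point needing care is Step 3: one must distinguish tuples $\boldsymbol{\eta}$, of which there may be infinitely many since $\Psi$ may be infinite, from the kernels they determine. Finiteness is claimed only for the set of distinct kernels. That is exactly what a subcover requires, since we can choose one tuple representing each distinct kernel used.
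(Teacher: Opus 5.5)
Your proof is correct. Your Steps 1 and 2 match the paper's opening moves: finiteness of ${\rm D}_{\Psi}(G)$ via Lemma~\ref{lemma:sometau}, and $N_{\Psi}^n\subseteq\ker(\operatorname{ind}[\boldsymbol{\eta}])$ for every admissible tuple. You finish differently, though. The paper works inside the given cover. It picks representatives $\boldsymbol{b}_1,\ldots,\boldsymbol{b}_s\in X$ of the finitely many cosets of $N_{\Psi}^n$ that meet $X$, and chooses for each $\boldsymbol{b}_j$ one member $\ker(\boldsymbol{\gamma}_j)$ of the cover containing it. Since $\ker(\boldsymbol{\gamma}_j)\supseteq \boldsymbol{b}_j+N_{\Psi}^n$, these $s$ kernels cover $X$. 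You instead apply the correspondence theorem to show that $\mathcal K_n(\Psi)$ is itself a finite family of subsets of $F^n$, so compactness is automatic for every $X$ and every cover. Your route gives a stronger structural statement: only finitely many distinct induced kernels exist at each level, so a finite $\Psi_0\subseteq\Psi$ already realizes all of $\mathcal K_n(\Psi)$. The paper's route gives a better explicit size for the subcover: at most the number of $N_{\Psi}^n$-cosets meeting $X$, hence at most $|F/N_{\Psi}|^n$. Your count is bounded only by the number of subgroups of $(F/N_{\Psi})^n$, which can be much larger. As you note, neither argument actually uses $n\ge {\rm D}_{\Psi}(G)$; both work for every $n\ge 1$.
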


\begin{proof} Since $[F:N_{\Psi}]<\infty$, it follows that $[F:\ker(\tau)]<\infty$ for any $\tau\in \Psi$. By Lemma \ref{lemma:sometau}, we have that $D_{\Psi}(G)<\infty$. It suffices to show the uniform $\Psi$-kernel-cover compactness of $F^n$ for any integer $n\geq D_{\Psi}(G)$.

Let $X$ be an arbitrary subset of $F^n$. Suppose that
\begin{equation}\label{equation F^n=union}
X\subseteq \bigcup_{\boldsymbol{\gamma}\in\Gamma}\ker(\boldsymbol{\gamma})
\end{equation}
where
$\Gamma\subseteq
\left\{
\operatorname{ind}[\boldsymbol{\eta}]:
\boldsymbol{\eta}\in
(\Psi\cup\{\mathbf 0\})^n
\setminus\{(\mathbf 0,\ldots,\mathbf 0)\}
\right\}.$

We first show that
\begin{equation}\label{eq:Npsisubset}
N_{\Psi}^n\subseteq \ker(\boldsymbol{\gamma})
\mbox{ for every } \boldsymbol{\gamma}\in\Gamma.
\end{equation}
Fix $\boldsymbol{\gamma}\in\Gamma$. Then $\boldsymbol{\gamma}=\operatorname{ind}[(\psi_1,\ldots,\psi_n)]$ with $(\psi_1,\ldots,\psi_n)\in
(\Psi\cup\{\mathbf 0\})^n
\setminus\{(\mathbf 0,\ldots,\mathbf 0)\}.$
Take arbitrary
$\boldsymbol y=(y_1,\ldots,y_n)\in N_{\Psi}^n.$
That is, for each $i\in [1,n]$ we have $y_i\in N_{\Psi}$. Since
$N_{\Psi}=\bigcap_{\psi\in\Psi}\ker\psi,$
we have $y_i\in\ker\psi_i$ whenever $\psi_i\in\Psi$. If
$\psi_i=\mathbf 0,$ then also $\psi_i(y_i)=0$. Hence,
$\boldsymbol{\gamma}(\boldsymbol y)=\sum_{i=1}^n\psi_i(y_i)=0.$
This proves \eqref{eq:Npsisubset}.

Consequently, $\ker(\boldsymbol{\gamma})$ is a union of cosets of $N_{\Psi}^n$ for every $\gamma\in\Gamma$.
Since the quotient group $F/N_{\Psi}$ is finite, it follows that $F^n/N_{\Psi}^n\cong (F/N_{\Psi})^n$ is finite, and so the image of $X$ in $F^n/N_\Psi^n$ is finite.   Then we can choose finitely many elements
$\boldsymbol{b}_1,\ldots,\boldsymbol{b}_s\in X$ such that
\begin{equation}\label{equation F^ncoveri=1tos}
X\subseteq \bigcup\limits_{j=1}^s (\boldsymbol{b}_j+N_{\Psi}^n).
\end{equation}
 By \eqref{equation F^n=union}, for each $j\in [1,s]$ we can take an induced universal homomorphism $$\boldsymbol{\gamma}_j\in\Gamma$$ such that
 \begin{equation}\label{eq:bjinetaj}
 \boldsymbol{b}_j\in\ker(\boldsymbol{\gamma}_j).
 \end{equation}

Let $\boldsymbol x$ be an arbitrary element of $X$. By \eqref{equation F^ncoveri=1tos}, we see that $\boldsymbol x-\boldsymbol{b}_k\in N_{\Psi}^n$ for some $k\in [1,s]$. Then it follows from \eqref{eq:Npsisubset} that $\boldsymbol x-\boldsymbol{b}_k\in \ker(\boldsymbol{\gamma}_k)$. Combined with \eqref{eq:bjinetaj}, we have $\boldsymbol x\in \ker(\boldsymbol{\gamma}_k)$. By the arbitrariness of choosing $\boldsymbol x$, we derive that $X\subseteq \bigcup_{j=1}^s\ker(\boldsymbol{\gamma}_j)$, and thus prove the conclusion.
\end{proof}

\bigskip

\noindent {\bf Acknowledgements}

\noindent This work is supported by NSFC (grant no. 12371335, 12271520).

\end{document}